\newtheorem{thm}{Theorem}
\newcommand{\al}{\alpha}
\newcommand{\sigmab}{\bar{\sigma}}
\begin{document}

\title[Divergence of Euler's method]{Strong 
and weak divergence in finite time of 
Euler's method for stochastic differential
equations with non-globally
Lipschitz continuous coefficients}

\author[M. Hutzenthaler, A.\ Jentzen 
\& P.\ E.\ Kloeden]
{Martin Hutzenthaler$^1$, Arnulf Jentzen$^2$ and Peter E. Kloeden$^3$}

\affiliation{
$^1$LMU Biozentrum,
Department Biologie II,
University of Munich (LMU), 
D-82152~Planegg-Martinsried, Germany,
e-mail: hutzenthaler$\,$(at)$\,$bio.lmu.de
\\
$^2$Program in Applied 
and Computational Mathematics, 
Princeton University,
Princeton, NJ 08544-1000, 
USA, e-mail: ajentzen$\,$(at)$\,$math.princeton.edu
\\
$^3$Institute for Mathematics, 
Goethe University Frankfurt am Main,
D-60054 Frankfurt am Main, Germany, 
e-mail: kloeden$\,$(at)$\,$math.uni-frankfurt.de
} 
\label{firstpage}
%\footnote[0]{{\it AMS\/\ subject
%classifications {\rm 65C30}}. }\footnote[0]{{\it

\maketitle

\begin{abstract}{Euler scheme,
Euler-Maruyama,
stochastic differential equations,
weak approximation,
weak divergence,
strong approximation,
strong divergence, 
non-globally Lipschitz continuous}
The stochastic Euler scheme is 
known to converge to the exact solution of
a stochastic differential equation
with globally Lipschitz continuous
drift and diffusion 
coefficient.
Recent results extend this convergence to 
coefficients which grow at most linearly.
For superlinearly growing coefficients 
finite-time convergence 
in the strong mean square 
sense remained an open question according
to [Higham, Mao \& 
Stuart (2002); Strong convergence 
of Euler-type methods for 
nonlinear stochastic 
differential equations, 
{\it SIAM J.\ Numer.\ Anal.} 
{\bf 40}, no.\ 3, 1041-1063].
In this article we answer this question
to the negative and prove for a large class of stochastic differential equations with 
non-globally Lipschitz continuous 
coefficients that
Euler's approximation converges neither
in the strong mean square sense 
nor in the numerically weak sense
to the exact solution  
at a finite time point. 
Even worse,
the difference
of the exact solution
and of the numerical
approximation 
at a finite time point
diverges 
to infinity
in the strong mean square sense 
and in the numerically weak sense.
\end{abstract}

%\def\thefootnote{\fnsymbol{footnote}}
%\def\@makefnmark{\hbox to\z@{$\m@th^{\@thefnmark}$\hss}}
%\footnotesize\rm\noindent
%\footnote[0]{{\it AMS\/\ subject
%classifications {\rm 65C30}}. }
%\normalsize\rm

\section{Introduction}%
\label{sec:introduction}
An important
numerical scheme for simulating stochastic differential
equations (SDEs) is Euler's method 
(see, e.g., Kloeden \&
Platen~1992,
Milstein~1995 
and
Higham~2001).
If the coefficients of an SDE are globally Lipschitz continuous,
then standard results 
(see, for instance,
Chapter~10 and Chapter~14 
in Kloeden \&
Platen~1992)
show convergence of the Euler approximation 
in the strong and numerically weak 
sense to the exact solution of the SDE.
It remained an open question whether the Euler approximation
also converges in the strong or 
numerically weak sense at a finite time point
if the coefficients 
of the SDE are not 
globally Lipschitz continuous,
see, e.g., 
Section~1 in Higham,
Mao \&
Stuart~(2002)
for a detailed
description of this open problem.
In this paper we answer this question to the negative.
More precisely, we prove for a large class of SDEs with 
superlinearly growing coefficient functions that both
the distance in the strong $L^p$-sense
and
the distance between 
the $p$-th 
absolute moments
of 
the Euler approximation
and 
of the exact solution of the SDE
diverge
to infinity for all $ p \in [1,\infty)$.
Thus the Euler scheme 
does not produce
an approximation
in the strong or numerically weak sense
of the exact solution of such an SDE.
% at finite time points. 

For clarity of exposition we concentrate in this section on the
following prominent example
% (also considered in \cite{MSH02,MT05,Ta02}).
Let $(X_t)_{t\geq0}$ be the unique
solution process
of the
one-dimensional SDE
\begin{equation}\label{exsde2}
d X_t =
- X_t^3\,dt + dW_t,
\qquad
X_0 = x_0
\end{equation}
for $ t \geq 0 $
where $(W_t)_{t\geq0}$ 
is a one-dimensional
standard 
Brownian motion and 
where $ x_0 \in \mathbb{R} $
is a constant.
The Euler approximation 
$
  (Y_n^N)_{
    n \in 
    \left\{ 0,1,\ldots,N
    \right\}
  }
$ 
of 
$\left( X_t \right)_{t \in [0,T]}$, 
$T\in(0,\infty)$ fixed,
is defined recursively through
$ Y^N_0 := x_0 $ and
\begin{equation}\label{exeuler2}
Y^N_{n+1}
:=
Y^N_n
-
\frac{T}{N}
\left( Y^N_n \right)^3
+
\left(
W_{ \frac{(n+1) T}{N} }
-
W_{ \frac{ n T }{N} }
\right)
\end{equation}
for all $n \in \left\{ 0,1,2, \ldots \right\} $
and all $ N \in \mathbb{N}:=\{1,2,\ldots\}$.
%The main observation of this article is that the absolute
%moments of the Euler approximation~\eqref{exeuler2} at a finite time point
%diverge to infinity, that is,
%\begin{equation}     \label{eq:unbounded.moments}
%  \lim_{N\to\infty}
%  \mathbb{E}\Big[\big| Y_N^N \big|^p\Big]
%  =\infty
%\end{equation}
%for all $p\in[1,\infty)$.

Two results motivated us to try to prove convergence of the
stochastic Euler approximation $ Y^N_N $
to the exact solution $ X_T $ of the
SDE~\eqref{exsde2} in the strong mean
square sense 
as the number of time steps 
$ N $ goes to infinity.
Gy\"ongy~(1998) established pathwise convergence for SDEs with locally 
Lipschitz continuous coefficients. 
More precisely, Theorem~1 in
Gy\"ongy~(1998) implies
\begin{equation}
\label{eq:pathwise}
  \lim_{ 
    N \rightarrow \infty 
  }
  \left|
    X_T
    - Y^N_N
  \right|
  = 0
\end{equation}
$\mathbb{P}$-a.s.\ pathwise 
convergence of the stochastic Euler
approximation~\eqref{exeuler2}
to the exact solution
of the SDE~\eqref{exsde2}.
This implies convergence of 
expectations of continuous and bounded 
functionals of the difference
between the Euler approximation and the exact solution.
Of course, the squared difference needed for mean square convergence
is not a bounded 
%functional 
but an unbounded functional.
Another motivation was an instructive 
conditional result of
Higham,
Mao \& Stuart~(2002).
They assume in their Theorem 2.2 
local Lipschitz continuity of the coefficients
of the SDE and boundedness of the $p$-th moment
of the Euler approximation and of the exact solution 
in the sense that
\begin{equation}
\label{eq:bounded_mom}
 \sup_{ N \in \mathbb{N} }
 \mathbb{E}\left[
   \sup_{ 
     n \in \{ 0, 1, 
     \ldots, N \} 
   }
   \left| Y_n^N \right|^{ p }
 \right]
 +\mathbb{E}\left[
    \sup_{ t \in [0,T] }
    \left| X_t \right|^p
  \right]
 <
 \infty 
\end{equation}
for one arbitrary $ p \in (2,\infty) $.
Under these assumptions they establish
in Theorem~2.2 in 
Higham,
Mao \& Stuart~(2002)
strong mean square convergence
of Euler's method
to the exact solution
of the SDE.
Under
assumption~\eqref{eq:bounded_mom}
they, in particular, 
establish
strong mean square 
convergence
\begin{equation}
 \lim_{ N \rightarrow \infty }
 \mathbb{E}\Big[\left|
   X_T - Y_N^N
 \right|^2\Big]
 = 
 0
\end{equation}
of the Euler approximation~\eqref{exeuler2} 
to the exact solution of the SDE~\eqref{exsde2}.
Boundedness of moments~\eqref{eq:bounded_mom}
of the Euler approximation, however,
remained an open question.
Higham,
Mao \& Stuart~(2002)
say on page 1060 that
in ``general, it is not clear when such moment bounds can
be expected to hold for explicit
methods with $f,g\in C^1$''
($f$ and $g$ denote 
in Higham,
Mao \& Stuart~(2002) the drift
function and the diffusion function respectively).

In this article we answer
Higham, Mao \&
Stuart's question
in the
case of the explicit 
Euler method
and superlinearly growing
coefficients of the SDE
to the negative 
(see Theorem~\ref{thm:euler}
below for details)
and establish strong 
$ L^p $-divergence
\begin{equation}  \label{eq:exstrong}
\lim_{N \rightarrow \infty}
\mathbb{E}\Big[\left|
X_T - Y^N_N
\right|^p\Big]
= 
\infty
\end{equation}
of the stochastic
Euler appoximation~\eqref{exeuler2}
in finite time $T\in(0,\infty)$
for all $ p \in [1,\infty)$
(see~\eqref{eq:euler_divergence} for details).
In addition numerically weak convergence (see, e.g., 
Section 9.4 in 
Kloeden \&
Platen~1992)
(not to be confused with stochastic weak convergence)
fails to hold, even more,
\begin{equation}  \label{eq:exweak}
\lim_{N \rightarrow \infty}
\biggl|
\mathbb{E}\Big[\left|X_T\right|^p\Big]
-\mathbb{E}\Big[\left|Y_N^N\right|^p\Big]
\biggr|
=\infty
\end{equation}
for all $ p \in [1,\infty)$,
see~\eqref{eq:euler_divergence}.
In particular,
Theorem~\ref{thm:euler} implies
that the absolute
moments of the Euler approximation~\eqref{exeuler2} at a finite time point
diverge to infinity, that is,
\begin{equation}     
\label{eq:unbounded.moments}
 \lim_{N\to\infty}
 \mathbb{E}\Big[\big| Y_N^N \big|^p\Big]
 =\infty
\end{equation}
for all $p\in[1,\infty)$.
Thus the moment bound assumption in
Theorem~2.2 
in Higham, Mao \&
Stuart~(2002)
(see also inequality~\eqref{eq:bounded_mom} here) 
is not satisfied
for the SDE~\eqref{exsde2}.

Note that the strong divergence~\eqref{eq:exstrong}
and the weak divergence~\eqref{eq:exweak}
of the Euler approximation is not
a special property of equation \eqref{exsde2}.
We establish this divergence for a large 
class of SDEs with superlinearly
growing coefficients in
Section \ref{secexamples}.
Moreover, 
our estimates are easily adapted to prove divergence of
other numerical schemes such as the Milstein scheme. For simplicity
we restrict ourselves to the Euler scheme.
Presence of noise, however, is essential. 
In the deterministic case,
the Euler scheme does converge
and both~\eqref{eq:exstrong} and~\eqref{eq:exweak} fail to hold.

Next we relate the divergence result~\eqref{eq:unbounded.moments}
regarding finite time intervals to a divergence result
of Mattingly, Stuart \& Higham~(2002)
regarding infinite time intervals
(see also 
Theorem~3.2 in 
Roberts \& 
Tweedie 1996 and
Lemma~4.1 in 
Higham, Mao \&
Stuart 2003).
Their 
Lemma 6.3 
shows
for the SDE~\eqref{exsde2}
that the second moment of the
Euler approximation diverges in infinite time
for any fixed discretization step size, that is,
\begin{equation}  \label{eq:unbounded.infinite.time}
  \lim_{n\to\infty}
  \mathbb{E}\Big[\big| Y_n^N \big|^2\Big]
  =\infty
\end{equation}
for every fixed $N\in\mathbb{N}$.
Note that this divergence 
result is rather different
to our divergence result~\eqref{eq:unbounded.moments}.
First the time discretisation step size does not converge
to zero in~\eqref{eq:unbounded.infinite.time}.
%Over an infinite time interval this might lead to a different
%behavior of the Euler approximation compared to the exact
%solution.
%the absolute moments of the Euler approximation
%on a finite time interval
%diverge although the paths of the Euler approximation converge
%almost surely.
Secondly
the Euler approximation diverges
even pathwise with positive probability on an infinite time interval.
More precisely,
Lemma 6.3 in
Mattingly, Stuart \& Higham~(2002)
proves
\begin{equation}
  \mathbb{P}\Big[
    \big| Y_n^N \big| 
    \geq 2^n \sqrt{N}
    \;\;\forall \, n \in \mathbb{N}
  \Big]
  >0
\end{equation}
for every fixed $N\in\mathbb{N}$.
The divergence~\eqref{eq:unbounded.infinite.time}
is then an immediate consequence
of this pathwise divergence result.
On a finite time interval the Euler approximation does converge pathwise to the exact solution 
according to 
Gy\"ongy (1998)
(see \eqref{eq:pathwise} here).
So here the divergence~\eqref{eq:unbounded.moments} 
of the moments is not a consequence
of the pathwise behavior.

In the next step we 
compare the divergence
results~\eqref{eq:exstrong}-\eqref{eq:unbounded.moments}
in this article
with a few further related 
results in the literature.
It is a classical result 
(see, e.g., Kloeden
\& Platen~1992
and 
Milstein~1995)
that the Euler approximation
converges in case of globally 
Lipschitz continuous 
drift and diffusion 
coefficient functions.
In contrast to our results on
superlinearly growing coefficients,
Yan (2002) proves numerically weak convergence of the Euler scheme 
if both drift and diffusion function have at most linear growth.
Zhang (2006) 
and
Berkaoui, Bossy 
and Diop~(2008)
prove strong convergence
for a class of drift and
diffusion functions with a singularity.
Yuan \& Mao (2008)
obtain the rate of strong $L^2$-convergence of the stochastic Euler scheme
for locally Lipschitz continuous 
coefficients if these grow at 
most linearly.
An explicit bound for the 
strong $L^1$-error is established in
Higham \& Mao (2005)
for the mean-reverting square-root process 
(a.k.a.\ CIR process)
which has linearly bounded coefficients.
Bernard \& Fleury (2001) establish convergence in probability under weak
hypotheses like local Lipschitz continuity.
Pathwise convergence results can be found in Gy\"ongy (1998),
Fleury (2005),
Jentzen \& Kloeden (2009) or in
Jentzen, Kloeden
\&
Neuenkirch~(2009).
A number of authors obtain convergence for
modified Euler schemes.
Milstein \& Tretyakov (2005) consider a modified Euler scheme for
non-globally Lipschitz continuous 
coefficients. They obtain 
numerically
weak convergence by 
discarding
trajectories of the 
approximation 
which leave a sufficiently large sphere.
Lamba, Mattingly \& 
Stuart~(2007) prove strong convergence of an Euler scheme
with an adaptive time-stepping algorithm for locally Lipschitz continuous
coefficients.
A completely different adaptive time-stepping algorithm with focus on long time
approximation is proposed by Lemaire (2007).
Finally, note that
in contrast to the
explicit Euler 
method~\eqref{exeuler2},
the implict Euler method
converges in the root mean
square sense
according to
Higham, Mao \& Stuart~(2002)
(see also
Hu~1996,
Talay~2002,
Szpruch \&
Mao~2010
and the references therein
for more convergence
results on implicit numerical
methods for SDEs).

The rest of this article is
organized as follows.
Our main result
(Theorem~\ref{thm:euler})
and several examples 
for the divergence of the
Euler scheme are provided
in Section~\ref{secgeneral}.
Simulations in Section~\ref{sec:simulations}
illustrate this divergence.
%While a conclusion of this 
%article is presented in Section~\ref{sec:conclusion},
The proof of Theorem~\ref{thm:euler} 
is postponed
to the final section.

\section{Main result and examples}
\label{secgeneral}
\label{secexamples}

Throughout this section
assume that the following setting
is fulfilled.
Fix $T\in(0,\infty)$ and let 
$ \left( \Omega, \mathcal{F}, \mathbb{P}
\right)$ be a probability space
with normal filtration
$ \left( \mathcal{F}_t 
\right)_{ t \in [0,T] } $.
Additionally, let
$ W\colon [0,T]
\times \Omega \rightarrow \mathbb{R}$
be a one-dimensional standard 
$\left( \mathcal{F}_t 
\right)_{t \in [0,T]}$-Brownian
motion 
and let 
$ \xi \colon \Omega \rightarrow \mathbb{R}$
be an $ \mathcal{F}_0 $/$\mathcal{B}(\mathbb{R})$-measurable
mapping.
Moreover, let $ \mu, \sigma \colon \mathbb{R}
\rightarrow \mathbb{R} $ be two
$ \mathcal{B}(\mathbb{R}) $/$\mathcal{B}(\mathbb{R})$-measurable
functions such that
the SDE
\begin{equation}
\label{generalsde}
dX_t = \mu(X_t)\,dt
+ \sigma(X_t)\,dW_t,
\qquad
X_0 = \xi
\end{equation}
for $ t \in [0,T] $
has a
solution.
More precisely, we assume
the existence of a 
predictable stochastic 
process
$ X \colon [0,T] \times
\Omega \rightarrow \mathbb{R} $
satisfying
\begin{equation}\label{exactf2}
\mathbb{P}\Bigg[
 \, \int^T_0 \left| \mu(X_s) \right| 
 + \left| \sigma(X_s) \right|^2 
 ds
 < \infty \,
\Bigg]
= 1
\end{equation}
and
\begin{equation}\label{exactf}
\mathbb{P}\Bigg[ \,
 X_t = \xi + \int^t_0 \mu(X_s)\,ds
 +
 \int^t_0 \sigma(X_s) \, dW_s \,
\Bigg] = 1
\end{equation}
for all $ t \in [0,T]$.
The drift function $\mu(\cdot)$ is the infinitesimal mean 
of the process $X$
and the diffusion function $\sigma(\cdot)$ is
the infinitesimal standard deviation of the process $X$.
The Euler approximation
for the SDE \eqref{generalsde} --
denoted by
$\mathcal{F}$/$\mathcal{B}(\mathbb{R})$-measurable mappings 
$ Y^N_n \colon \Omega \rightarrow
\mathbb{R} $, 
$ n \in \left\{ 0, 1, \dots, N \right\} $,
$ N \in \mathbb{N}$, --
is given by
$ Y^N_0(\omega) := \xi(\omega)
$
and
\begin{equation}\label{corexpeuler}
Y^N_{n+1}(\omega) := 
Y^N_{n}(\omega)
+ \frac{T}{N}
\cdot
\mu\left( 
  Y^N_{n}(\omega)
\right)
+
\sigma\!\left( Y^N_n(\omega) \right)
\cdot
\left( W_{ \frac{(n+1) T}{N} }(\omega)
- W_{ \frac{n T}{N} }(\omega)
\right)
\end{equation}
for all 
$ n \in \left\{ 0,1,\dots,N-1 \right\} $,
$ N \in \mathbb{N}$
and all $ \omega \in \Omega $.
Now we formalize the main result
of this article which asserts
the divergence of Euler's method
for the SDE \eqref{generalsde}
if at least one
coefficient grows superlinearly.
\begin{thm}\label{thm:euler}
Assume that the setting
above is fulfilled with
$
 \mathbb{P}\big[ \,
   \sigma(\xi) \neq 0 \,
 \big]
 > 0
$
and let
$ C \geq 1 $,
$ \beta > \alpha > 1 $ be constants
such that
\begin{equation}   \label{eq:maxmin_condition}
 \max\!\big(\left| \mu(x) \right|,
 \left| \sigma(x) \right|
 \big) \geq 
 \frac{
   \left| x \right|^{\beta}
 }{C} 
 \quad
  \text{and}
 \quad
 \min\!\big(\left| \mu(x) \right|,
 \left| \sigma(x) \right|
 \big) \leq 
 C | x |^{\alpha}
\end{equation}
for all $ |x| \geq C $.
Then there exists a constant
$ c \in (1,\infty) $ and a sequence of 
nonempty events
$ \Omega_N \in \mathcal{F} $,
$ N \in \mathbb{N} $, 
with 
$
  \mathbb{P}\big[ 
    \Omega_N
  \big]
  \geq 
  c^{ 
    \left( 
      - N^c
    \right) 
  }
$
and 
$
  \left| 
    Y^N_N( \omega)
  \right|
  \geq
  2^{ 
    \left( 
      \alpha^{ 
        \left( 
          N - 1 
        \right) 
      }
    \right) 
  }
$
for all $ \omega \in \Omega_N $
and all $ N \in \mathbb{N} $.
Moreover, if the exact
solution $ X : [0,T] \times \Omega
\rightarrow \mathbb{R} $ 
of the 
SDE~\eqref{generalsde}
satisfies 
$ \mathbb{E}\big[ | X_T |^p \big]
< \infty $ for one
$p \in [1,\infty)$, then
\begin{equation}   
\label{eq:euler_divergence}
  \lim_{N \rightarrow \infty}
  \mathbb{E}\Big[
    \left|
      X_T - Y^N_N
    \right|^p
  \Big] 
  = \infty
  \quad
  \text{and}
  \quad
  \lim_{N \rightarrow \infty}
  \bigg| 
    \mathbb{E}\Big[
      \left|
        X_T 
      \right|^p
    \Big]
    - 
    \mathbb{E}\Big[
      \left| Y^N_N
      \right|^p 
    \Big]
  \bigg| 
  = \infty .
\end{equation}
% where 
% $ Y^N_k $, 
% $k \in \left\{ 0,1,\dots,N
% \right\} $,
% $N \in \mathbb{N}$, is defined
% through \eqref{corexpeuler}.
\end{thm}
\noindent
Roughly speaking, 
the theorem asserts that in 
the presence of 
noise there exists a sequence of events 
of at least exponentially small probability
on which 
%the deterministic 
%dynamics lets 
the Euler approximations 
grow at least double-exponentially 
fast.
%Moreover, the probability of these 
%events is at least exponentially
%small.
Consequently as being 
{\bf double-exponentially large} 
over-compensates that
the events have at least
{\bf exponentially small} probability, 
the $L^1$-norm of the
Euler approximations $Y_N^N$ 
are unbounded in $N\in\mathbb{N}$.
The proof of the double exponential growth
of the Euler approximations 
is based on elementary calculations.
For details the reader is referred to
Section~\ref{sec:proofs} where the proof
of Theorem~\ref{thm:euler} can be
found.

Condition~\eqref{eq:maxmin_condition} should be read as follows.
Either the drift function grows in a
higher polynomial order than linearly and 
the diffusion function function grows slower
than that or the diffusion function
grows in a higher polynomial order than
linearly and the drift function
grows slower than that.
More formally it suffices to show 
that either
\begin{equation}\label{eq:fdom}
 \left| \mu(x) \right| \geq \frac{ \left| x \right|^{\beta} }{C} ,
 \qquad
 \left| \sigma(x) \right| \leq C \left| x \right|^{\alpha}
\end{equation}
for all $|x|\geq C$
or
\begin{equation}\label{eq:gdom}
 \left| \sigma(x) \right| \geq \frac{ \left| x \right|^{\beta} }{C} ,
 \qquad
 \left| \mu(x) \right| \leq C \left| x \right|^{\alpha}
\end{equation}
for all $|x|\geq C$
and
some constants 
$\beta>1$, $\beta>\alpha\geq0$, $C>0$.
Note that our estimates 
need $\beta>1$. 
A drift
function of the form 
$\mu(x)=x \log|x|$ 
for all $ x \in \mathbb{R} $ 
is too small for our estimates.
The assumption that the diffusion function does not vanish on the starting point
ensures the presence of noise 
in the first time step.

In the remainder of this section we apply Theorem~\ref{thm:euler}
to a selection of examples.
%%%Readers interested in simulations are refered to the example of the
%%%stochastic Ginzburg-Landau equation. We choose this equation for simulations
%%%because there exists an explicit solution and because it resembles our
%%%introductory example.
Note that
the coefficients in the following examples 
satisfy an appropriate one-sided 
linear growth condition.
Therefore, the $p$-th absolute moment
of the exact solution is finite 
in each example for every $ p \in [1,\infty) $
according to Theorem 2.4.1 in
Mao~(1997). 
For all examples we check that assumption~\eqref{eq:fdom} 
is satisfied. Thus both
the distance in the strong $L^p$-sense
and
the distance between 
the $p$-th 
absolute moments
of 
the Euler approximation
and 
of the exact solution 
diverges
to infinity for every $p \in [1,\infty)$
in each of the
following examples.
\subsection{The introductory
example}
The example in Section~\ref{sec:introduction} is
\begin{equation}
dX_t = -X_t^3 \, dt 
 + dW_t,
\qquad
X_0 = x_0 \in\mathbb{R}
\end{equation}
for $ t \in [0,T] $. The dominating 
coefficient is the drift function with dominating exponent $\beta=3$.
The diffusion function has exponent zero and we may choose $\alpha=0$.
The constant $C$ in condition~\eqref{eq:fdom} can be chosen as $C=1$.

\subsection{Stochastic 
Ginzburg-Landau equation}
The Ginzburg-Landau equation is from the theory of superconductivity.
It has been introduced by Ginzburg \& Landau (1950)
to describe a phase transition.
Its stochastic version with multiplicative noise can be written as
\begin{equation}     \label{eq:ginzburg-landau}
 d X_t = 
 \left(  
   \left( \eta+\frac{1}{2}\sigmab^2
   \right) X_t
   -
   \lambda X_t^3 
 \right) dt 
+ \sigmab X_t \, dW_t,
\qquad
X_0 = x_0\in(0,\infty)
\end{equation}
for $ t \in [0,T] $
where $\eta \geq 0$, 
$\lambda,\sigmab>0$.
Its solution is known explicitly (see, e.g., Section 4.4 in
Kloeden \& 
Platen~1992)
\begin{equation} \label{eq:ginzburg-landau_explicit}
X_t=\frac{x_0\exp\left(\eta t+\sigmab W_t\right)}
        {\sqrt{1+2 x_0^2 \lambda\int_0^t\exp\left(2\eta s+2 \sigmab W_s\right)ds}}
\end{equation}
for $ t \in [0,T] $.
Since
\begin{equation}\begin{split}
\left| 
\left( \eta + \frac{ \sigmab^2 }{2}
\right) x - \lambda x^3
\right|
&\geq
\lambda 
\left| x \right|^3
-
\left( \eta + \frac{ \sigmab^2 }{2}
\right)
\left| x \right|
\\
&\geq
\frac{\lambda}{2}
\left| x \right|^3
+  
\frac{\lambda}{2}
\left| x \right|^3
-
\left( \eta + \frac{ \sigmab^2 }{2}
\right)
\left| x \right|
\\
&=
\frac{\lambda}{2}
\left| x \right|^3
+
\frac{\lambda}{2}
\left| x \right|
\left( 
\left| x \right|^2
-
\frac{ 2 \eta + \sigmab^2 
}{ \lambda }
\right)
\\
&\geq
\frac{\lambda}{2}
\left| x \right|^3
+
\frac{\lambda}{2}
\left| x \right|
\left( 
\left| x \right|
-
\frac{ 2 \eta + \sigmab^2 
}{ \lambda }
\right)
\geq
\frac{\lambda}{2}
\left| x \right|^3
\end{split}
\end{equation}
for all 
$ \left| x \right|
\geq \max\left(1, 
\frac{ 2 \eta + \sigmab^2 
}{ \lambda }
\right) $,
the constants in 
condition~\eqref{eq:fdom} can be chosen as
$\beta=3$, $\alpha=1$,
$C=\max\left(1,\sigmab,
\frac{2}{\lambda},
\tfrac{2\eta+\sigmab^2}{\lambda}
\right)$.
% This choice of $C$ ensures
% $|\lambda x^3-(\eta+\tfrac{1}{2}\sigmab^2)|\geq\tfrac{\lambda}{2}|x|^3$
% for all $|x|\geq C$.

\subsection{Stochastic 
Verhulst equation}
The Verhulst equation is an ordinary differential equation and is a simple
model for a population with competition between individuals.
Its stochastic version with multiplicative noise can be written as
\begin{equation} \label{eq:verhulst}
d X_t =\left(
 \left( \eta + \frac{1}{2}\sigmab^2
 \right) 
 X_t 
 -
 \lambda X_t^2 
 \right) dt + \sigmab X_t \, dW_t,
\qquad
X_0 = x_0\in(0,\infty)
\end{equation}
for $ t \in [0,T] $
where
$\eta, \lambda,\sigmab>0$.
Its solution is known explicitly (see, e.g., Section 4.4 in
Kloeden \& 
Platen~1992) 
and is given by
\begin{equation}  \label{eq:verhulst_explicit}
X_t=\frac{x_0\exp\left(\eta t+\sigmab W_t\right)}
        {1+x_0 \lambda\int_0^t\exp\left(\eta s+\sigmab W_s\right)ds}
\end{equation}
for $ t \in [0,T] $.
% To meet the requirements of Theorem~\ref{thm:euler} we extend the drift function
% to the negative reals as $(\eta+\tfrac{1}{2}\sigmab^2)x-\operatorname{sgn}(x)x^2$.
The dominant exponent of the drift function is two and of the diffusion function
is one.
Thus we may choose $\beta=2$ and $\al=1$.
The constant $C$ in 
condition~\eqref{eq:fdom} can be chosen as
$C=\max\left(\sigmab,
\tfrac{2}{\lambda},\tfrac{2\eta+\sigmab^2}{\lambda}\right)$.

\subsection{Feller diffusion 
with logistic growth}
The branching process with logistic growth (see, e.g., 
Lambert~2005) 
is a stochastic Verhulst 
equation with Feller noise. It solves
\begin{equation}  \label{eq:fellog}
d X_t =\lambda X_t
 \left(K- X_t \right) dt 
+ \sigmab \sqrt{ X_t}\, dW_t,
\qquad
X_0 = x_0\in(0,\infty)
\end{equation}
for $ t \in [0,T] $
where $\lambda, K,\sigmab>0$. 
There is no explicit solution for 
this equation.
However, 
it features the following self-duality
\begin{equation}
\mathbb{E}^x \exp\left(-\frac{2\lambda}{\sigmab^2}X_t y\right)=
\mathbb{E}^y \exp\left(-\frac{2\lambda}{\sigmab^2}x X_t \right)
\end{equation}
for all $ t \in [0,T] $
and all $ x,y \in [0,\infty) $
where $\mathbb{E}^x$ refers to the starting point $X_0=x$, see 
Hutzenthaler \& 
Wakolbinger (2007).
As constants for 
condition~\eqref{eq:fdom} 
serve $\beta=2$, $\al=1$,
$C=\max(1,\tfrac{2}{\lambda},\sigmab,2K)$.

\subsection{Protein 
Kinetics}
The  proportion $x$ of one form of a certain protein can be modelled
by an ordinary differential equation whose appropriate stochastic version 
is given by
\begin{multline*}
  d X_t = \left( \eta - X_t + 
   \lambda X_t \left( 1 - X_t \right)
             +\frac{1}{2}\sigmab^2 
               X_t \left( 1 - X_t
               \right) \left( 1 - 2 X_t
               \right)
             \right) \! dt \\
             + \sigmab X_t 
             \left( 1 - X_t \right) 
             dW_t
\end{multline*}
for $ t \in [0,T] $
where $X_0=x_0\in(0,1)$, $\eta\in[0,1]$ and 
$\lambda, \sigmab>0$, see Section 7.1 in Kloeden
\& Platen~(1992).
Here Theorem~\ref{thm:euler} applies with $\beta=3$, $\al=2$ and $C$ sufficiently large.

\section{Simulations}\label{sec:simulations}
In this section we present two 
numerical simulations which
illustrate the divergence of Euler's
method as formulated in 
Theorem~\ref{thm:euler}.
For this purpose we choose an 
equation with an explicit 
solution
to compare with. Consider the stochastic differential equation
\begin{equation}  \label{eq:ginzburg-landau_simulation}
dX_t=- \left( \frac{1}{2} \sigmab^2 X_t - X_t^3 \right) dt+\sigmab X_t \, d W_t,\qquad X_0=1
\end{equation}
for $ t \in [0,3] $
where $ \sigmab > 0 $
is a constant.
This equation agrees with the stochastic Ginzburg-Landau equation~\eqref{eq:ginzburg-landau}
in the case $\eta=0,\lambda=1,x_0=1$.
Its explicit solution 
is given by equation~\eqref{eq:ginzburg-landau_explicit}
with $\eta=0,\lambda=1,x_0=1$.

Table~\ref{tab1} shows Monte Carlo simulations of the second 
moment both of
the exact solution
$ \mathbb{E}\!\left[ 
( X_3 )^2 \right]$ 
and of the Euler approximation 
$\mathbb{E}\!\left[ ( Y^N_N )^2
\right]$
for five values of $\sigmab $.
For each value of $\sigmab$ the table contains ten simulation runs
for the Euler approximation. The number
of time steps is $N=10^3$.
\begin{table}
\begin{center}
\begin{tabular}{|c||c|c|c|c|c|} \hline
&$\sigmab=2$
&$\sigmab=4$
&$\sigmab=5$
&$\sigmab=6$
&$\sigmab=7$
\\ \hline\hline
Exact solution 
$\mathbb{E}\!\left[ ( X_3 )^2
\right]$
&$0.4739$
% 0.474020, 0.474269, 0.475626
% 0.473329, 0.474265, 0.473014
% 0.473812, 0.474311, 0.474289
% 0.472294
&$0.9598$
% 0.955779, 0.959367, 0.957528
% 0.963198, 0.956439, 0.953888
% 0.959433, 0.965902, 0.964101
% 0.962235
&$1.2357$
% 1.228932, 1.238750, 1.234725,
% 1.242253, 1.237152, 1.234627,
% 1.233486, 1.234265, 1.245110,
% 1.228200
&$1.5423$ 
%1.5364, 1.5386, 1.5473, 1.5442, 1.5512,
%1.5269, 1.5396, 1.5392, 1.5502, 1.5493
&$1.8861$
% 1.879450, 1.880267, 1.902922
% 1.867702, 1.903806, 1.885652
% 1.883130, 1.888679, 1.884919
% 1.884065
\\ \hline
\hline
Simulation $1$
& $0.4577$ 
& $0.7448$ 
& $0.7243$ 
& NaN 
& NaN 
\\
\hline
Simulation $2$
& $0.4549$ 
& $0.7518$ 
& $0.7094$ 
& $0.5097$ 
& NaN 
\\
\hline
Simulation $3$
& $0.4632$ 
& $0.7330$ 
& $0.6912$ 
& NaN 
& NaN 
\\
\hline
Simulation $4$
& $0.4580$ 
& $0.7544$ 
& $0.7150$ 
& $0.5378$ 
& NaN 
\\
\hline
Simulation $5$
& $0.4619$ 
& $0.7458$ 
& $0.6998$ 
& $0.5197$ 
& NaN 
\\
\hline
Simulation $6$
& $0.4610$ 
& $0.7664$ 
& $0.7327$ 
& $0.5243$ 
& NaN 
\\
\hline
Simulation $7$
& $0.4606$ 
& $0.7542$ 
& $0.7189$ 
& NaN 
& NaN 
\\
\hline
Simulation $8$
& $0.4690$ 
& $0.7451$ 
& $0.7122$ 
& NaN 
& NaN 
\\
\hline
Simulation $9$
& $0.4667$ 
& $0.7336$ 
& $0.7146$ 
& $0.5475$ 
& NaN 
\\
\hline
Simulation $10$
& $0.4591$ 
& $0.7605$ 
& $0.6659$ 
& NaN 
& NaN 
\\
\hline
\end{tabular}
\caption{\footnotesize Simulation 
of $ \mathbb{E}\!\left[
( X_3 )^2
\right] $
and
$
\mathbb{E}\!\left[
( Y^N_N )^2
\right]
$
for the 
SDE~\eqref{eq:ginzburg-landau_simulation} 
where the number
of time steps is $N=10^3$. The number of Monte Carlo runs for the 
Euler approximation is
$10^5$ and for the exact 
solution is $10^7$.}
\label{tab1}
\end{center}
\end{table}
In the case $\sigmab=7$ the parameters of the simulation
are such that the Euler approximation produces the
value ``NaN'' 
(NaN is the IEEE arithmetic 
representation 
for ``not-a-number'',
here this is due to an operation 
``Inf $-$ Inf'' where Inf is 
the IEEE arithmetic
representation for positive infinity).
In contrast to this
our Monte Carlo simulations in the cases $\sigmab\in\{2,4,5\}$
are all finite.
In these cases the probability of the event on which the Euler
approximation diverges seems to be rather small.
The last but one column in Table~\ref{tab1} exemplifies a parameter setting in which the
event of large growth has a probability such that in $10^5$ runs in some Monte Carlo
simulations no explosion occurs and in some Monte Carlo simulations at least
one explosion occurs.

The values in Table~\ref{tab1} are either within distance two of the
true value or ``NaN''. 
This is due to
the double-exponential growth of the deterministic system for some initial values.
If the simulation starts to grow, then it reaches ``NaN'' very quickly.
We encountered similar behaviour for other exponents greater than two.
In order to see double-exponential growth of the Euler approximation in a plot
we consider an exponent close to one. More precisely, 
we plot the
Monte Carlo simulation of 
the first absolute moment 
of the Euler approximation
of $X_{10}$ where
\begin{figure}
\includegraphics[width=\textwidth]{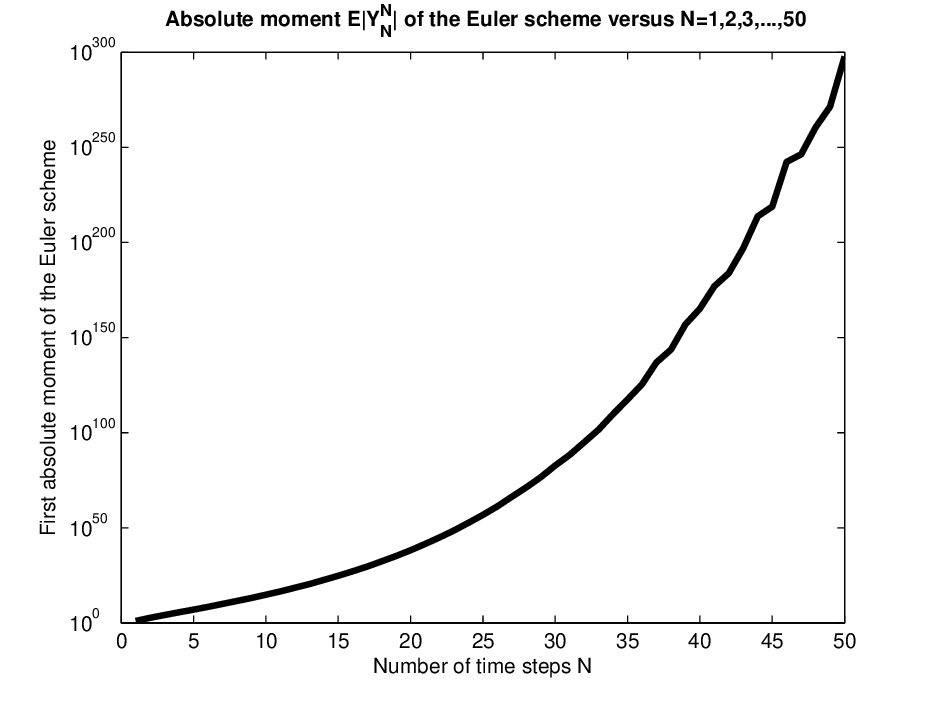}
\caption{\footnotesize
\label{f:plot}
Simulation of the
first absolute moment
$
  \mathbb{E}\!\left[ 
    | Y^N_N |
  \right] 
$
of the Euler scheme
for the SDE~\eqref{eq:complicated}
and
for 
number of time steps
$N \in \left\{ 1,2,3,\dots,50 \right\} $. 
The number of Monte Carlo 
runs is $10^4$. }
\end{figure}
\begin{equation}\label{eq:complicated}
d X_t=
-10\,\mathrm{sgn}(X_t) 
\left| X_t \right|^{1.1} dt
+ 4\,dW_t,
\qquad
X_0 = 0
\end{equation}
for $ t \in [0,10] $
see Figure~\ref{f:plot}
(see also the simulation values in Table~\ref{tab2}).
Note that the graph resembles an
exponential function and 
that the $y$-axis is logarithmic.
Thus the growth of the graph in Figure~\ref{f:plot} is indeed
close to a double-exponential function.
\newlength{\halfpage}
\setlength{\halfpage}{\textwidth/2-1mm}
\begin{table}
\noindent
\begin{minipage}{\halfpage}
\begin{center}
\begin{tabular}{|c||c|} \hline
$N$ &
$
  \mathbb{E}\!\left[ 
    | Y^N_N |
  \right] 
$
\\ \hline
\hline
$N=1$ & $10.1160$ \\ \hline
$N=2$ & $444.5797$ \\ \hline
$N=3$ & $1.3649 \cdot 10^4$ \\ \hline
$N=4$ & $3.7566 \cdot 10^5$ \\ \hline
$N=5$ & $1.0295 \cdot 10^7$ \\ \hline
\vdots& \vdots \\ \hline
\end{tabular}
\end{center}
\end{minipage}
\begin{minipage}{\halfpage}
\begin{center}
\begin{tabular}{|c||c|} \hline
$N$ &
$
  \mathbb{E}\!\left[ 
    | Y^N_N |
  \right] 
$
\\ \hline
\hline
$N=48$ & $3.4449 \cdot 10^{260}$ \\ \hline
$N=49$ & $2.6631 \cdot 10^{271}$ \\ \hline
$N=50$ & $6.1140 \cdot 10^{297}$ \\ \hline
$N=51$ & $7.2342 \cdot 10^{301}$ \\ \hline
$N=52$ & Inf \\ \hline
$N=53$ & Inf \\ \hline
\end{tabular}
\end{center}
\end{minipage}
\caption{\footnotesize Simulation values of the first absolute moment
$
  \mathbb{E}\!\left[ 
    | Y^N_N |
  \right] 
$
for $N \in \left\{ 1,2,3,\dots,53 
\right\} $ and
the SDE~\eqref{eq:complicated}.}
\label{tab2}
\end{table}
In addition we should mention that some fine-tuning was necessary
to obtain suitable parameters for which the simulated absolute moment
grows but is not ``NaN''.

\section{Proofs}\label{sec:proofs}

\begin{lemma}\label{normalrv}
Let $ \left( \Omega, \mathcal{F},
\mathbb{P} \right) $ be
a probability space and let
$ Z \colon \Omega \rightarrow \mathbb{R} $
be an $ \mathcal{F} $/$ \mathcal{B}(
\mathbb{R})$-measurable mapping
which is standard normal distributed. 
Then
\begin{equation}
\mathbb{P}\Big[ 
  | Z | \geq x
\Big]
\geq 
\frac{ 
 x \, e^{ - x^2 }
}{4},
\qquad
\mathbb{P}\Big[ 
 | Z | 
 \in [ x, 2 x ]
\Big]
\geq 
\frac{ x \, e^{ - 2 x^2 } }{2}
\end{equation}
for all $ x \in [0,\infty)$.
\end{lemma}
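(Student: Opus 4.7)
The plan is to handle the two inequalities separately. For the second inequality, I would exploit the fact that on $[x,2x]$ the Gaussian density is bounded below by its value at the right endpoint: since $e^{-z^2/2} \geq e^{-(2x)^2/2} = e^{-2x^2}$ for $z \in [x,2x]$, we get
\begin{equation*}
\mathbb{P}\bigl[\,|Z| \in [x,2x]\,\bigr]
=\frac{2}{\sqrt{2\pi}}\int_x^{2x} e^{-z^2/2}\,dz
\geq \frac{2}{\sqrt{2\pi}}\cdot x \cdot e^{-2x^2}
\geq \frac{x}{2}\, e^{-2x^2},
\end{equation*}
where in the last step I use the elementary estimate $\sqrt{2\pi}<4$.

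For the first inequality, the naive bound $\mathbb{P}[|Z|\geq x]\geq \mathbb{P}[|Z|\in[x,2x]]\geq \tfrac{x e^{-2x^2}}{2}$ is too weak once $x>\sqrt{\ln 2}$ (because $e^{-2x^2}$ decays faster than $\tfrac12 e^{-x^2}$), so I would prove a Mills-ratio style lower bound
\begin{equation*}
\int_x^{\infty} e^{-z^2/2}\,dz \,\geq\, \frac{x}{x^2+1}\, e^{-x^2/2}, \qquad x\geq 0.
\end{equation*}
The derivation is a short calculus check: differentiating $-\tfrac{x}{x^2+1}e^{-x^2/2}$ gives
$\bigl(1-\tfrac{2}{(x^2+1)^2}\bigr)e^{-x^2/2}\leq e^{-x^2/2}$, and integrating from $x$ to $\infty$ (the boundary term at $\infty$ vanishes) yields the displayed inequality.

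Combining this with the trivial $\mathbb{P}[|Z|\geq x]=\frac{2}{\sqrt{2\pi}}\int_x^{\infty}e^{-z^2/2}dz$, the desired bound $\tfrac{x e^{-x^2}}{4}$ reduces to the elementary numerical estimate
\begin{equation*}
\frac{8}{\sqrt{2\pi}} \,\geq\, (x^2+1)\, e^{-x^2/2} \qquad \text{for all } x\geq 0.
\end{equation*}
This I would verify by noting that the right-hand side has derivative $x(1-x^2)e^{-x^2/2}$, hence attains its maximum at $x=1$ with value $2e^{-1/2}\approx 1.21$, whereas the left-hand side equals $8/\sqrt{2\pi}\approx 3.19$. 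The case $x=0$ of both inequalities is trivial since the right-hand sides vanish.

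The only mildly delicate step is pinning down the correct Mills-ratio lower bound with the right constant; once that is in hand, both inequalities reduce to comparing explicit elementary functions. I do not anticipate any genuine obstacle beyond verifying the constants.
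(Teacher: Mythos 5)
Your proof is correct. For the second inequality you do exactly what the paper does: bound the density from below by its value at the right endpoint $2x$ and multiply by the interval length $x$, then use $2/\sqrt{2\pi}\geq 1/2$. For the first inequality, however, you take a genuinely different route. The paper truncates the integral at $\sqrt{2}\,x$ and again bounds the density below by its value at the right endpoint, which produces the factor $e^{-(\sqrt 2 x)^2/2}=e^{-x^2}$ directly and reduces everything to the single numerical check $2(\sqrt2-1)/\sqrt{2\pi}\geq 1/4$ -- a two-line argument perfectly parallel to the one for the second bound. You instead establish the Mills-ratio lower bound $\int_x^\infty e^{-z^2/2}\,dz\geq \frac{x}{x^2+1}e^{-x^2/2}$ (your calculus verification is correct: the derivative of $-\frac{x}{x^2+1}e^{-x^2/2}$ is $\bigl(1-\frac{2}{(x^2+1)^2}\bigr)e^{-x^2/2}\leq e^{-x^2/2}$, and the boundary term vanishes) and then check $\frac{8}{\sqrt{2\pi}}\geq (x^2+1)e^{-x^2/2}$ by locating the maximum of the right-hand side at $x=1$. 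Your intermediate bound is asymptotically much stronger than what the lemma asks for (it decays like $e^{-x^2/2}/x$ rather than $xe^{-x^2}$), which is why the final comparison of elementary functions has so much room; the price is a slightly longer argument with an extra calculus lemma. Your observation that the naive reduction of the first bound to the second fails for $x>\sqrt{\ln 2}$ is accurate and shows you identified the real issue; the paper's $\sqrt 2 x$ truncation is precisely the minimal fix for it. Both proofs are complete and the constants check out in each.
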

\begin{proof}[Proof of Lemma 
\ref{normalrv}]
We have
\begin{equation}  \begin{split}
\mathbb{P}\Big[ 
  | Z | \geq x 
\Big]
&=
2 
\cdot
\mathbb{P}\Big[ 
  Z \geq x
\Big]
= 
  2
  \left(
    \int^{ \infty }_{ x }
      \frac{1}{ \sqrt{ 2 \pi } }
      e^{ - \frac{s^2}{2} }
    ds
  \right)
\\&\geq
  2
  \left(
    \int^{ \sqrt{2} x }_{ x }
      \frac{1}{ \sqrt{ 2 \pi } }
      e^{ - \frac{s^2}{2} }
    ds
  \right)
\\&\geq
  2 \left( \sqrt{2} - 1 \right) x
  \left(
    \frac{1}{ \sqrt{ 2 \pi } }
    e^{ 
      - \frac{ \left( \sqrt{2} x \right)^2 
      }{2} 
    }
  \right)
\\&=
  \left(
    \frac{ 2 \left( \sqrt{2} - 1 \right) }{ 
      \sqrt{ 2 \pi } }
  \right)
  x e^{ - x^2 }
\geq
  \frac{ x e^{ - x^2 } }{4}
\end{split}     \end{equation}
for all $ x \in [0,\infty) $.
Moreover, a similar estimate yields
\begin{equation}  \begin{split}
\mathbb{P}\Big[ 
  | Z | \in [x,2x]
\Big]
&=
2 
\cdot
\mathbb{P}\Big[ 
  Z \in [x,2x]
\Big]
= 
2
\left(
\int^{ 2 x }_{ 
  x
}
\frac{1}{ \sqrt{ 2 \pi } }
e^{ - \frac{s^2}{2} }
ds
\right)
\\&\geq
 2 x
 \left(
 \frac{1}{ \sqrt{ 2 \pi } }
 e^{ - \frac{ \left( 2 x \right)^2 
 }{2} }
 \right)
=
 \left(
   \frac{ 2 }{ 
     \sqrt{ 2 \pi } }
 \right)
 x e^{ - 2 x^2 }
\geq
\frac{ x \, e^{ - 2 x^2 } }{2}
\end{split}     \end{equation}
for all $ x \in [0,\infty)$.
\end{proof}
\begin{proof}[Proof of 
Theorem \ref{thm:euler}]
By assumption we have
$\mathbb{P}[\,| \sigma(\xi) | > 0\,]>0$.
Therefore, there exists a
real number $ K \in (1,\infty) $, such that
\begin{equation}  \label{eq:def_mu}
\vartheta:=\mathbb{P}\!\left[ \,
 \left| \sigma(\xi) \right| 
 \geq \frac{1}{K},
 \;\;
 \left| \xi \right|
 + T \left| \mu(\xi) \right|
 \leq K
 \,
\right] > 0.
\end{equation}
Then we define
\begin{equation}\label{rn}
r_N := 
\max\!\left( 2, C,
\left( 
 \frac{ 2 C N }{ T } 
 + 2 C^2
\right)^{ 
  \!\!
  \frac{1}{
     \left( \beta - \alpha \right)
  } 
} 
\right)
\in [C,\infty)
\end{equation} and
consider the sets
$ \Omega_N \in \mathcal{F} $
given by
\begin{multline}
\label{omegan}
\Omega_N :=
\bigg\{
 \omega \in \Omega 
 \,\bigg|\,
\left|
W_{ \frac{ (n+1)T}{N} }(\omega)
-
W_{ \frac{ n T}{N} }(\omega)
\right|
\in
\left[ 
 \frac{T}{N},
 \frac{2 T}{N}
\right]
\forall 
\,
n \in \left\{ 1, \dots, N-1 \right\} ,
\\
 \left|
   W_{ \frac{ T }{ N } }(\omega)
   - W_0(\omega)
 \right|
 \geq K \left( r_N + K \right),
\\
 \left| \sigma(\xi(\omega)) 
 \right|
 \geq \frac{1}{K},
\;\;
   \left| \xi(\omega) \right|
   +
   T \left| \mu(\xi(\omega)) 
   \right|
 \leq K    
\bigg\}
\end{multline}
for all $ N \in \mathbb{N}$.
Note that
\begin{equation}\label{thisrN}
\frac{T}{N C}
 \left( r_N 
 \right)^{\left(\beta-\alpha\right)}
\geq 
\frac{T}{N C}
 \left( 
   \frac{ 2 C N }{ T } 
   + 2 C^2
 \right)
\geq
2
+ 
\frac{2 T C}{N}
\end{equation}
for all $ N \in \mathbb{N}$
due to the definition of $r_N$,
$ N \in \mathbb{N}$.

Let now $ N \in \mathbb{N}$ 
and $ \omega \in \Omega_N $ 
be arbitrary.
Then we claim
\begin{equation}\label{toshowhier}
\left| Y^N_n(\omega)
\right|
\geq
\left( r_N \right)^{
 \left(
   \alpha^{\left( n - 1 \right) }
 \right)
}
\end{equation}
for all 
$ n \in 
\left\{ 1,2,\dots,N 
\right\} $.
We prove equation \eqref{toshowhier}
by induction 
on $n
\in \left\{ 1, 2, \dots, N 
\right\}$.
In the base case $n=1$ we have
\begin{equation}  \begin{split}
\left|
 Y^N_1(\omega)
\right|
&=
\left|
 \xi(\omega)
 +
 \frac{T}{N}
 \mu\left(
   \xi(\omega)
 \right)
 +
 \sigma\left(
   \xi(\omega)
 \right)\left(
   W_{ \frac{T}{N} }(\omega)
   -
   W_{ 0 }(\omega)
 \right)
\right|
\\&\geq
\left|
 \sigma\left(
   \xi(\omega)
 \right)
\right| \cdot
\left|
   W_{ \frac{T}{N} }(\omega)
   -
   W_{ 0 }(\omega)
\right|
-
\left|
 \xi(\omega)
\right|
 -
 \frac{T}{N}
\left|
 \mu\left(
   \xi(\omega)
 \right)
\right|
\\&\geq
\left|
 \sigma\left(
   \xi(\omega)
 \right)
\right| \cdot
\left|
   W_{ \frac{T}{N} }(\omega)
   -
   W_{ 0 }(\omega)
\right|
-
K
\\&\geq
\frac{1}{K}
\left|
   W_{ \frac{T}{N} }(\omega)
   -
   W_{ 0 }(\omega)
\right|
-
K
\geq
\frac{
 K \left( r_N + K \right)
}{K}
- K
= r_N
\end{split}     \end{equation}
due to the definition~\eqref{corexpeuler}
of $Y_1^N$
and due to the definition~\eqref{omegan}
of $\Omega_N$.
For the induction step $ n \to n+1 $ 
we assume
that equation \eqref{toshowhier}
holds for one
$ n \in \left\{ 1, 2, \dots, N - 1 \right\}$.
In particular, we then obtain
\begin{equation}\label{yrN}
\left|
 Y^N_n(\omega) 
\right|
\geq
r_N
\geq C
\geq 1
\end{equation}
since $ r_N \geq 2 $.
Additionally, we have
\begin{align*}
\left| 
 Y^N_{n+1}(\omega)
\right|
&=
\left|
 Y^N_{n}(\omega)
 + \frac{T}{N} 
   \mu( Y^N_{n+1}(\omega) )
 +
 \sigma( Y^N_{n+1}(\omega) ) 
 \left(
   W_{ \frac{ (n+1) T }{ N } }(\omega)
   -
   W_{ \frac{ n T }{ N } }(\omega)
 \right)
\right|
\\&\geq
\left|
 \frac{T}{N} \mu(
   Y^N_{n}(\omega)
 )
 +
 \sigma(
   Y^N_{n}(\omega)
 ) 
 \left(
   W_{ \frac{ (n+1) T }{ N } }(\omega)
   -
   W_{ \frac{ n T }{ N } }(\omega)
 \right)
\right|
-
\left| 
 Y^N_{n}(\omega)
\right|
\\&\geq
\max\left( 
 \frac{T}{N} 
 \left|
   \mu(
     Y^N_{n}(\omega)
   )
 \right|,
 \left|
 \sigma(
   Y^N_{n}(\omega)
 )
 \right| \cdot
 \left|
   W_{ \frac{ (n+1) T }{ N } }(\omega)
   -
   W_{ \frac{ n T }{ N } }(\omega)
 \right|
\right)
\\&\;\;
- \min\left( 
 \frac{T}{N} 
 \left|
   \mu(
     Y^N_{n}(\omega)
   )
 \right|,
 \left|
 \sigma(
   Y^N_{n}(\omega)
 )
 \right| \cdot
 \left|
   W_{ \frac{ (n+1) T }{ N } }(\omega)
   -
   W_{ \frac{ n T }{ N } }(\omega)
 \right|
\right)
\\&\;\;
-
\left| 
 Y^N_{n}(\omega)
\right|
\end{align*}
and
\begin{equation}  \begin{split}
\left| 
 Y^N_{n+1}(\omega)
\right|
&\geq
\max\left( 
 \frac{T}{N} 
 \left|
   \mu(
     Y^N_{n}(\omega)
   )
 \right|,
 \frac{T}{N}
 \left|
 \sigma(
   Y^N_{n}(\omega)
 )
 \right|
\right)
\\&\;\;
- \min\left( 
 \frac{T}{N} 
 \left|
   \mu(
     Y^N_{n}(\omega)
   )
 \right|,
 \frac{2 T}{N}
 \left|
 \sigma(
   Y^N_{n}(\omega)
 )
 \right| 
\right)
-
\left| 
 Y^N_{n}(\omega)
\right|
\\&\geq
\frac{T}{N}
\max\left( 
 \left|
   \mu(
     Y^N_{n}(\omega)
   )
 \right|,
 \left|
 \sigma(
   Y^N_{n}(\omega)
 )
 \right|
\right)
\\&\;\;
- 
\frac{2 T}{N}
\min\left( 
 \left|
   \mu(
     Y^N_{n}(\omega)
   )
 \right|,
 \left|
 \sigma(
   Y^N_{n}(\omega)
 )
 \right| 
\right)
-
\left| 
 Y^N_{n}(\omega)
\right|
\end{split}     \end{equation}
due to definition~\eqref{corexpeuler}
and definition~\eqref{omegan}.
Therefore,
the growth 
condition~\eqref{eq:maxmin_condition}
and
inequality \eqref{yrN}
imply
\begin{equation}  \begin{split}
\left| 
 Y^N_{n+1}(\omega)
\right|
&\geq
\frac{T}{N C}
 \left|
     Y^N_{n}(\omega)
 \right|^{\beta}
- 
\frac{2 T C}{N}
 \left|
     Y^N_{n}(\omega)
 \right|^{\alpha}
-
\left| 
 Y^N_{n}(\omega)
\right|
\\&\geq
\frac{T}{N C}
 \left|
     Y^N_{n}(\omega)
 \right|^{\beta}
- 
\frac{2 T C}{N}
 \left|
     Y^N_{n}(\omega)
 \right|^{\alpha}
-
\left| 
 Y^N_{n}(\omega)
\right|^{\alpha}
\\&=
\left| 
 Y^N_{n}(\omega)
\right|^{\alpha}
\left(
\frac{T}{N C}
 \left|
     Y^N_{n}(\omega)
 \right|^{\left(\beta-\alpha\right)}
- 
\frac{2 T C}{N}
- 1
\right)
\\&\geq
\left| 
 Y^N_{n}(\omega)
\right|^{\alpha}
\left(
\frac{T}{N C}
 \left( r_N 
 \right)^{\left(\beta-\alpha\right)}
- 
\frac{2 T C}{N}
- 1
\right)
\geq
\left| 
 Y^N_{n}(\omega)
\right|^{\alpha}
\end{split}     \end{equation}
where the last step
follows from
inequality \eqref{thisrN}.
The induction
hypothesis hence yields
\begin{equation}
\left| 
 Y^N_{n+1}(\omega)
\right|
\geq
\left| 
 Y^N_{n}(\omega)
\right|^{\alpha}
\geq
\left(
 \left( r_N \right)^{
   \left(
   \alpha^{\left(n-1\right)}
 \right) }
\right)^{ \alpha }
=
 \left( r_N \right)^{
   \left(
   \alpha^n
   \right)
 } .
\end{equation}
This proves
inequality~\eqref{toshowhier}
for $n+1$.
Therefore, equation 
\eqref{toshowhier} indeed holds
for all $ n \in \left\{ 1,2,\dots,N
\right\}$.
In particular
-- since $ N \in \mathbb{N} $
and $ \omega \in \Omega_N $ were 
arbitrary --
we obtain
\begin{equation}
\label{eq:doubleexp}
  \left| Y^N_N(\omega)
  \right|
  \geq
  \left(
    r_N
  \right)^{ 
    \left( 
      \alpha^{ \left( N-1 \right) } 
    \right) 
  }
  \geq
    2 ^{ 
    \left( 
      \alpha^{ \left( N-1 \right) } 
    \right) 
  }
\end{equation}
for all 
$ \omega \in \Omega_N$
and all 
$ N \in \mathbb{N}$.

Furthermore, 
definition~\eqref{eq:def_mu} 
gives
\begin{align*}
  \mathbb{P}\big[
    \Omega_N
  \big]
&=
  \vartheta  
  \cdot 
  \mathbb{P}\bigg[
    \left|
      W_{ \frac{ T}{N} }
    \right| 
    \geq 
    K ( r_N + K )
  \bigg]	
  \cdot
  \Bigg(
    \mathbb{P}\!\left[ \,
      \left|
        W_{ \frac{ T}{N} }
      \right|
      \in
      \left[ 
        \frac{T}{N},
        \frac{2 T}{N}
      \right]
    \right]
  \Bigg)^{ \!\! ( N - 1 ) }
\\
  &\geq
  \vartheta  
  \cdot 
  \mathbb{P}\Big[
    \left|
      W_1
    \right| 
    \geq 
    N^{ \frac{1}{2} } 
    T^{ - \frac{1}{2} } 
    K 
    ( r_N + K )
  \Big]	
  \cdot
  \Bigg(
    \mathbb{P}\!\left[ \,
      \left|
        W_1
      \right|
      \in
      \left[ 
        \frac{ \sqrt{T} }{ \sqrt{N} },
        \frac{ 2 \sqrt{T} }{ \sqrt{N} }
      \right]
    \right]
  \Bigg)^{ \!\! N }
\end{align*}
for all $ N \in \mathbb{N} $.
Lemma~\ref{normalrv} therefore yields
\begin{align}
  \mathbb{P}\big[
    \Omega_N
  \big]
&\geq
  \frac{
    \vartheta  
    \sqrt{N} 
    K 
    ( r_N + K )
  }{
    4 \sqrt{T}
  }
  \cdot
  \exp\!\left( 
    - 
    \frac{ 
      N K^2  
      ( r_N + K )^2
    }{ 
      T 
    }
    - 2 T
  \right)
  \cdot
  \Bigg(
    \frac{
      \sqrt{T}
    }{
      2 \sqrt{N}
    }
  \Bigg)^{ \!\! N }  
\nonumber
\\&\geq
  \frac{
    \vartheta  
    e^{ - 2 T }
    T^{ \frac{ (N-1) }{ 2 } }
  }{
    2^{ (N+2) } 
    N^N
  }
  \cdot
  \exp\!\Big( 
    - 
    N T^{-1} K^2  
    ( r_N + K )^2
  \Big)
\end{align}
for all $ N \in \mathbb{N} $.
This shows the existence of a constant
$ c \in (1,\infty) $ such that
\begin{equation}
\label{eq:omegaN}
  \mathbb{P}\big[ \Omega_N 
  \big] 
  \geq 
  c^{ \left( - N^c \right) } 
\end{equation}
for all $ N \in \mathbb{N} $.

Combining \eqref{eq:doubleexp}
and \eqref{eq:omegaN} then gives
\begin{multline}
  \lim_{ N \rightarrow \infty }
  \mathbb{E}\Big[
    \left|  
      Y^N_N
    \right|  
  \Big]
\geq
  \lim_{ N \rightarrow \infty }
  \mathbb{E}\Big[
    1_{ \Omega_N }
    \!
    \left|  
      Y^N_N
    \right|
  \Big]
\geq
  \lim_{ N \rightarrow \infty }
  \left(
    \mathbb{P}\big[ \Omega_N \big]
    \cdot	
    \left(
      r_N
    \right)^{
      \left(
        \alpha^{
          \left(N-1\right)
        } 
      \right) 
    }
  \right)
\\ \geq
  \lim_{ N \rightarrow \infty }
  \left(
    \mathbb{P}\big[ \Omega_N \big]
    \cdot	
    2^{
      \left(
        \alpha^{
          \left( N - 1 \right)
        } 
      \right) 
    }
  \right)
\geq
  \lim_{ N \rightarrow \infty }
  \left(
    c^{ \left( - N^c \right) }
    \cdot	
    2^{
      \left(
        \alpha^{
          \left(N-1\right)
        } 
      \right) 
    }
  \right)
  = \infty
\end{multline}
and Jensen's inequality hence shows
\begin{equation}
\lim_{N \rightarrow \infty}
\mathbb{E}\Big[\left| 
 Y^N_N
\right|^p \Big]= \infty .
\end{equation}
Since $ 
\mathbb{E}\big[ | X_T |^p \big]
< \infty $ by assumption,
we finally conclude
\begin{equation}
  \lim_{N \rightarrow \infty}
  \mathbb{E}\Big[
    \left|
      X_T - Y^N_N
    \right|^p
  \Big] 
  = \infty
  \quad
  \text{and}
  \quad
  \lim_{N \rightarrow \infty}
  \bigg| 
    \mathbb{E}\Big[
      \left|
        X_T 
      \right|^p
    \Big]
    - 
    \mathbb{E}\Big[
      \left| Y^N_N
      \right|^p 
    \Big]
  \bigg| 
  = \infty .
\end{equation}
This, \eqref{eq:doubleexp} 
and \eqref{eq:omegaN} 
finally complete the proof 
of Theorem~\ref{thm:euler}.
\end{proof}

\begin{acknowledgements}
This work has been partially 
supported by the 
Collaborative Research Centre 701
"Spectral Structures and Topological Methods in Mathematics" funded by the German Research
Foundation. Moreover, the authors thank three anonymous referees for their helpful comments.
\end{acknowledgements}
%\bibliographystyle{acm}
%\bibliographystyle{abbrv}
%\bibliography{./bibliography_euler}

\label{lastpage}
\end{document}